\documentclass[12pt]{amsart}
\addtolength{\oddsidemargin}{-.4in}
\addtolength{\evensidemargin}{-.4in}
\addtolength{\textwidth}{1.0in} \addtolength{\textheight}{1in}
\usepackage{amsmath,amssymb, amscd}

\theoremstyle{plain}
\newtheorem{thm}{Theorem}
\newtheorem{theorem}[thm]{Theorem}

\newtheorem{lemma}[thm]{Lemma}

\newtheorem{proposition}[thm]{Proposition}
\theoremstyle{definition}

\newtheorem{definition}[thm]{Definition}

\numberwithin{equation}{section}
\newcommand{\0}{{\mathcal O}}

\newcommand{\sO}{{\mathcal O}}


\newcommand{\cit}{{\mathbb C}}

\newcommand{\pit}{{\mathbb P}}


\newcommand{\fg}{{\mathfrak g}}

\newcommand{\fm}{{\mathfrak m}}

\newcommand{\ft}{{\mathfrak t}}

\newcommand\Spec{\rm Spec}

\def\codim{\mathop{\rm codim}\nolimits}

\title{Symplectic resolutions for conical symplectic varieties}
\author{Michel Brion and Baohua Fu}
\begin{document}
\maketitle

\begin{abstract}
We introduce the notion of a conical symplectic variety,
and show that any symplectic resolution of such a variety
is isomorphic to the Springer resolution of a nilpotent
orbit closure in a semisimple Lie algebra, composed with a linear projection.
\end{abstract}


\section{Introduction}
\begin{definition}\label{def}
A {\em conical symplectic variety} is an affine variety $W \subset \cit^N$, smooth in codimension 1,
such that (i) $W$ is invariant under the dilation action of $\cit^*$ on $\cit^N$, (ii)
There exists a holomorphic symplectic form $\omega$ on $W_{reg}$ such that $\lambda^* \omega = \lambda \omega$ for all $\lambda \in \cit^*$, (iii) For any
resolution $\pi:Z \to W$, the 2-form $\pi^* \omega$ (defined on $\pi^{-1}(W_{reg}))$ extends to a regular 2-form $\Omega$ on  $Z$.
 A resolution $\pi: Z \to W$ is called {\em symplectic} if $\Omega$ is a holomorphic symplectic form on the whole of $Z$.
\end{definition}

For a conical symplectic variety $W \subset \cit^N$, 
its normalization $\tilde{W}$ becomes a symplectic variety in the sense of Beauville \cite{B}.
Typical examples of conical symplectic varieties are nilpotent orbit closures
$\bar{\0}$ in a semi-simple Lie algebra $\mathfrak{g}$ and their symplectic resolutions can be constructed as follows.  For a flag variety $G/P$, its cotangent bundle $T^*_{G/P}$ admits a natural Hamiltonian $G$-action, and the image of the moment map 
$T^*_{G/P} \to \mathfrak{g}^* \simeq \mathfrak{g}$ is a nilpotent orbit closure 
$\bar{\0}_P$ (the Richardson orbit of $P$). Hence we have a projective generically 
finite morphism $\mu: T^*_{G/P} \to \bar{\0}_P$, which is called the Springer map 
associated to $G/P$. If $\mu$ is birational, it becomes a symplectic resolution 
of $\bar{\0}_P$, which we call a Springer resolution. In \cite{F}, it is proven 
that symplectic resolutions of nilpotent orbit closures are exactly Springer 
resolutions.

%

Further examples of conical symplectic varieties can be constructed as follows: 
take a nilpotent orbit $\0 \subset \fg$ and a linear subspace $L \subset \fg$ 
such that the natural projection $\fg \to \fg/L$ maps $\bar{\0}$ birationally 
to a subvariety $W$. 
Then $W$ is a conical symplectic variety provided it is smooth in codimension 1. 
This happens for example if $L$ is a line, not contained in
the secant variety of $\bar{\0}$.
We suspect that this construction gives all conical symplectic varieties. 

The main result of this note classifies symplectic resolutions for conical 
symplectic varieties. It generalizes the main theorem of \cite{F}, 
with a simpler proof.

\begin{theorem} \label{main}
Let $\pi: Z \to W$ be a symplectic resolution for a conical symplectic variety. Then 

(i)  $Z \simeq T^*_{G/P}$  for some flag variety $G/P$.

(ii) The Springer map $\mu: T^*_{G/P} \to \bar{\0}_P$ associated to $G/P$ 
is birational.

(iii) There exists a linear subspace $L \subset \fg$ such that 
$W \subset \cit^N$ is the image of  $\bar{\0}_P \subset \fg$ under the 
projection $p: \fg \to \fg/L $. The induced map $\bar{\0}_P \to W$ is birational.

(iv) The map $\pi$ is isomorphic to the composition $p \circ \mu$.

(v) If $W$ is normal, then $W = \bar{\0}_P$.
\end{theorem}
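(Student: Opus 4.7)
My plan is to exploit the lifted $\C^*$-action on $Z$ to identify $Z$ with the cotangent bundle of its fixed locus, then use the Poisson algebra of $W$ to recognize the fixed locus as a flag variety, and finally realize $\pi$ as the composition of the Springer moment map with a linear projection. I would first lift the dilation action on $W \subset \C^N$ to a $\C^*$-action on $Z$ (via standard lifting results for $\C^*$-actions through symplectic resolutions) so that $\pi$ becomes equivariant and $\Omega$ is semi-invariant of weight $1$. Because $\pi$ is projective and $W$ contracts to the origin as $\lambda \to 0$, every orbit in $Z$ has a well-defined limit, so all $\C^*$-weights on $T_pZ$ at every fixed point $p \in F := Z^{\C^*}$ are non-negative. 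Nondegeneracy of $\Omega_p$ of weight $1$ forces weights to occur in pairs $(w, 1-w)$; combined with non-negativity, all weights must lie in $\{0, 1\}$. The Bialynicki-Birula retraction then exhibits $Z$ as a weight-$1$ vector bundle over $F$, canonically identified with $T^*F$ via $\Omega$.

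Next I would extract a Lie algebra from the Poisson structure on $W$. Set $\fg := \0(W)_1$, the weight-$1$ piece of the graded coordinate ring. Since the Poisson bracket satisfies $\{\0(W)_i, \0(W)_j\} \subset \0(W)_{i+j-1}$, the space $\fg$ is closed under the bracket and is a finite-dimensional Lie algebra; via $W \hookrightarrow \C^N$ it identifies with a quotient of $(\C^N)^*$. Each $f \in \fg$ gives a Hamiltonian vector field $H_f$ on $Z$ of $\C^*$-weight $0$, and these integrate to an action of a connected algebraic group $\tilde G$ on $Z$ by symplectic automorphisms commuting with $\C^*$; this action preserves $F$. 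The semisimple part $G \subset \tilde G$ should act transitively on $F$, making $F \simeq G/P$ for some parabolic $P \subset G$. I would argue transitivity by combining the fact that $\fg$ generates $\0(W)$ as a Poisson algebra (by conicity, i.e.\ by $W$ being linearly spanned by degree-$1$ coordinates) so that Hamiltonian vector fields span the tangent space at a generic smooth point, with classical structure theory identifying smooth projective $G$-varieties with open orbit.

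Once $Z \simeq T^*(G/P)$, the classical moment map $\mu: T^*(G/P) \to \fg^* \simeq \fg$ is the Springer map with image the Richardson orbit closure $\bar{\0}_P$. The identity $\langle \mu(z), X\rangle = X(\pi(z))$ for $X \in \fg \subset \0(W)_1$ means $\mu$ factors $\pi$: dualizing the inclusion $\fg \hookrightarrow (\C^N)^*$ and using the Killing form identification $\fg \simeq \fg^*$ produces a linear map $p: \fg \to \C^N$ with kernel $L$ such that $\pi = p \circ \mu$. Since $\pi$ is birational, both $\mu: Z \to \bar{\0}_P$ and $p|_{\bar{\0}_P}: \bar{\0}_P \to W$ must be birational, proving (ii)--(iv). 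For (v), if $W$ is normal then the birational finite morphism $\bar{\0}_P \to W$ must be an isomorphism by Zariski's main theorem, using normality of $\bar{\0}_P$ (which holds for the classes of Richardson orbits admitting a birational Springer resolution).

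The main obstacle I expect is the second step: producing a semisimple group acting transitively on $F$. The weight analysis and Bialynicki-Birula decomposition in the first step, and the dualization/linear projection in the third, are essentially formal; but forcing $F$ to be a \emph{flag} variety rather than merely a smooth projective variety whose cotangent bundle is a symplectic resolution requires genuine structural input about the Poisson algebra of a conical symplectic variety, and this is where I expect the heart of the argument to lie.
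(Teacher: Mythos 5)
Your step 1 contains a genuine gap. From the existence of $\lim_{\lambda\to 0}\lambda\cdot z$ for every $z\in Z$ you conclude that all tangent weights at every fixed point are non-negative, but this inference is false: existence of limits only says that $Z$ is covered by the attracting sets of the components of $Z^{\C^*}$, and lower-dimensional strata can perfectly well carry negative weights (already a $\BP^1$ inside $\pi^{-1}(0)$ on which $\C^*$ acts non-trivially has weights $+a$ and $-a$ at its two fixed points). What Bialynicki--Birula actually gives is one component $M$ of $Z^{\C^*}$ whose attracting set $U$ is \emph{open}, and on $U$ your weight argument does yield $(U,\Omega)\simeq(T^*_M,\omega_{can})$; but to conclude $U=Z$, i.e.\ that $Z$ is globally the cotangent bundle of its fixed locus, you need additional input. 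This is precisely how the paper argues, but only in the isolated-singularity case (Proposition \ref{sym}), where $M\simeq\BP^n$ is forced by the dimension bound and smoothness criterion of \cite{CMSB} and $U=Z$ then follows from properness of the Springer map. In the general case your step 1 does not deliver $Z\simeq T^*F$.

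The second gap is the one you yourself flag, and it is not a peripheral obstacle but the entire content of the theorem: nothing in your sketch forces $M$ (or $F$) to be a flag variety. The bracket computation $\{\0(W)_1,\0(W)_1\}\subset\0(W)_1$ and the fact that degree-one functions generate $\C[W]$ give at best a finite-dimensional Lie algebra of weight-zero Hamiltonian vector fields with a dense orbit on $Z$ (and even integrating these to an algebraic group action needs justification); a dense orbit on $Z$ says nothing about transitivity on the fixed locus or parabolicity of the stabilizer. The paper's substitute for this missing structural input is contact geometry on the projectivization: by Lemma \ref{l.Nam} (Namikawa), $\BP Z=(Z\setminus\pi^{-1}(0))/\C^*$ is a projective contact manifold with contact line bundle $\bar\pi^*\0_{\BP W}(1)$; when $\BP W$ is singular one has $b_2(\BP Z)\geq 2$ and $K_{\BP Z}$ not nef, so the classification of \cite{KPSW} gives $\BP Z\simeq\BP(T^*_M)$, global generation of the contact line bundle makes $T_M$ globally generated, Borel--Remmert \cite{BR} gives $M\simeq G/P\times A$, affineness of the cone kills the abelian factor (Proposition \ref{contact}), and Lemma 5 of \cite{N2} descends this to $Z\simeq T^*_{G/P}$; the case $\BP W$ smooth is treated separately via Beauville's classification of Fano contact manifolds (Lemma \ref{iso}) together with Proposition \ref{sym}. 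Without an input of this strength your step 2 is a statement of the theorem, not a proof of it. Your step 3 is essentially the paper's concluding argument (degree-one functions on $\tilde W$ are $H^0(G/P,T_{G/P})=\fg$, giving the factorization through $\fg\to\fg/L$), but note a slip in (v): what is needed there is normality of $W$ and Zariski's main theorem, not normality of $\bar{\0}_P$ (which need not hold and is irrelevant); the paper argues that $W$ normal gives $W=\tilde W$, and then the finite birational map $\tilde W\to\bar{\0}_P$ followed by the birational $\bar{\0}_P\to W$ forces $W=\bar{\0}_P$.
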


Remark that our assumptions in Definition \ref{def} are optimal for Theorem \ref{main}.  
This theorem is inspired by the alternative proof of \cite{F} provided by Namikawa in 
Section 8 of \cite{N2}.  
As an application, we provide a simpler proof of the main theorem of \cite{N2}, Section 7. 
\vspace{0.3 cm}

{\em Acknowledgements:} 
We are grateful to Yoshinori Namikawa for pointing out an error in
a previous version, and to the referees for their helpful comments. 
Baohua Fu is supported by NSFC (11031008 and 11225106).

\section{Proof of Theorem \ref{main}}

For a smooth variety $Y$, we denote by $T_Y$ the tangent bundle, by $T^*_Y$ the cotangent bundle,
and by $K_Y$ the canonical bundle (the determinant of $T^*_Y$). 
A contact structure on $Y$  is a corank one subbundle $F \subset T_Y$ such that the bilinear form on $F$ with
values in the quotient line bundle $L = T_Y/F$ deduced from the Lie bracket on $T_Y$ is everywhere 
non-degenerate. This implies that $\dim Y = 2n-1$ is odd and $K_Y \simeq L^{-n}$.  We call $L$ the contact 
line bundle of the contact structure on $Y$. A typical example of contact manifold is the projectivised cotangent 
bundle $\pit(T^*_M) := (T^*_M \setminus (\text{zero section}))/\cit^*$ of a smooth variety $M$, where the contact 
line bundle is $\0_{\pit(T^*_M)}(1)$.

\begin{proposition} \label{contact}
Let $X \subset \pit^{N-1}$ be a closed singular subvariety 
and $f: Y \to X$ a resolution. Assume that $Y$ has a contact structure with the contact line bundle $f^* \0_X(1)$. 
Let $\tilde{X} \to X$ be the normalization map. Then $Y \simeq \pit(T^*_{G/P})$ for some flag variety $G/P$ and 
the induced map $Y \to \tilde{X}$ yields the Stein factorization of the projectivised Springer map associated to $G/P$.
\end{proposition}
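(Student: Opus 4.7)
My plan has three stages: identify $Y$ as a projectivised cotangent bundle $\pit(T^*_M)$, then show $M = G/P$ for some flag variety, and finally match the Stein factorisation of $f$ with that of the projectivised Springer map.

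\textbf{Step 1.} I would first observe that since $X$ is singular and $f: Y\to X$ is a resolution, $f$ contracts some curve, so the contact line bundle $L := f^*\0_X(1)$ is nef but not ample. On the other hand, $L$ is globally generated (as the pullback of a very ample line bundle) and big (as $f$ is birational), and from $K_Y \simeq L^{-n}$ I conclude that $Y$ is not Fano. I would then invoke a theorem of Demailly on projective contact manifolds (later reproven by Kebekus-Peternell-Sommese-Wi\'sniewski), which says that any such $Y$ must be of the form $\pit(T^*_M)$ for some smooth projective variety $M$, with $L = \0_{\pit(T^*_M)}(1)$.

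\textbf{Step 2.} Under this identification, $H^0(Y, L) = H^0(M, T_M) =: \fg$, the Lie algebra of $G:=\Aut^0(M)$. The plan is to exploit that base-point-freeness of $L$ on $Y$ translates into global generation of $T_M$, so that $G$ acts transitively on $M$ and $M$ is projective homogeneous. Borel-Remmert then gives $M \simeq A \times R$ with $A$ an abelian variety and $R$ rational homogeneous. A K\"unneth computation, using the triviality of $T_A$, yields the bound $h^0(M, \Sym^k T_M) = O(k^{\dim A + 2\dim R - 1})$. Since $L$ is big, $h^0(Y, L^k)$ must grow as $k^{2\dim M - 1} = k^{2\dim A + 2\dim R - 1}$, forcing $\dim A = 0$; hence $M = G/P$ with $G$ semi-simple.

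\textbf{Step 3.} Finally, under $Y = \pit(T^*_{G/P})$, the natural inclusion $\fg \hookrightarrow H^0(Y, L)$ produces the projectivised moment map $\mu_P: Y \to \pit(\fg^*)$, whose image is $\pit(\bar\0_P)$; this is the projectivised Springer map. The morphism $f$ is induced by a linear subsystem $V \subset \fg$, so $f$ equals $\mu_P$ followed by a linear projection $\pit(\fg^*) \dashrightarrow \pit(V^*)$. The birationality of $f$ forces $\mu_P$ to be birational, and the birational map $\pit(\bar\0_P)\dashrightarrow X$ yields an isomorphism on normalisations. Thus the first factor $Y\to\tilde X$ of the Stein factorisation of $f$ coincides with that of $\mu_P$.

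The main obstacle will be Step 1, whose conclusion rests on Demailly's classification of projective contact manifolds with non-ample contact line bundle --- a deep theorem that serves as the backbone of the argument. Step 2 is then a clean combination of base-point-freeness (for global generation of $T_M$) and bigness (to rule out abelian factors), and Step 3 is essentially a comparison of maps through the moment-map description.
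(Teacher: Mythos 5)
Your proposal is essentially correct, and after Step 1 it diverges from the paper's argument in an interesting way. Step 1 is the same as in the paper (the paper excludes the Fano alternative of KPSW by noting $b_2(Y)\ge 2$, you by noting $L$ is non-ample; the classification itself is due to Kebekus--Peternell--Sommese--Wi\'sniewski, with Demailly's contribution being the removal of the non-nefness hypothesis, so your attribution is backwards; also note that both your ``$f$ contracts a curve'' and the paper's ``$b_2(Y)\ge2$'' implicitly exclude the case of a finite resolution of a non-normal $X$). In Step 2 the paper kills the abelian factor $A$ by passing to the affine cone $\hat X$ and observing that the birational map $T^*_M\setminus(\text{zero section})\to\hat X\setminus\{0\}$ must contract the $A$-directions because $\hat X$ is affine; your section-growth count $h^0(M,\Sym^kT_M)=O(k^{\dim A+2\dim R-1})$ against the bigness bound $h^0(Y,L^k)\gtrsim k^{2\dim M-1}$ is a valid alternative and is checked correctly. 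In Step 3 the paper again works on affine cones, using Hartogs ($\dim G/P\ge2$) to identify $\mathbb{C}[\tilde W$-type rings$]$ and showing $V\setminus\{o\}$ is the normalization of $\hat X\setminus\{0\}$; you instead factor $f$ as the complete-linear-system map $\phi_{|L|}$ (which is exactly the projectivised Springer map, since $H^0(Y,L)=H^0(G/P,T_{G/P})=\fg$) followed by a linear projection. This is legitimate and in fact yields more than the Proposition states: since fibres of $\phi_{|L|}$ are contained in fibres of the birational map $f$, the projectivised Springer map is birational --- a fact the paper only extracts later, in the proof of Theorem \ref{main}, from the degree-one generation of $\cit[W]$.

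The one place where your write-up has a real gap in justification is the final sentence of Step 3: ``the birational map $\pit(\bar\0_P)\dashrightarrow X$ yields an isomorphism on normalisations.'' A birational map (even a birational morphism) between two varieties does not in general identify their normalizations, and the Stein factorisations of two birational morphisms from the same $Y$ need not agree (compare a blow-down with the identity). What saves you here is a point you did not state: because the subsystem $V'\subset\fg$ defining $f$ is base-point free on $Y$, the image $\pit(\bar\0_P)=\phi_{|L|}(Y)$ is disjoint from the centre of the linear projection, so the projection restricted to $\pit(\bar\0_P)$ is a \emph{finite} morphism onto $X$; being finite and birational, it does induce an isomorphism $\widetilde{\pit(\bar\0_P)}\simeq\tilde X$ compatible with the maps from $Y$ (both Stein intermediates are normalizations since the maps are birational with normal source). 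With that finiteness remark inserted, your argument closes, and it is shorter than the paper's cone-and-Hartogs route while giving the birationality of the Springer map for free.
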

\begin{proof}
As $X$ is singular, we get $b_2(Y) \geq 2$. Note that  $K_Y = f^* \0_X(-n)$ with $n = (\dim X+1)/2$, 
hence $K_Y$ is not nef. By \cite{KPSW}, we deduce that 
$Y \simeq \pit(T^*_M)$ for some smooth projective variety $M$ and 
$\0_{\pit(T^*_M)}(1) \simeq f^* \0_X(1)$. This implies that $\0_{\pit(T^*_M)}(1)$,  hence
the tangent bundle $T_M$  is globally generated. 
We deduce that $M$ is a homogeneous variety, hence by \cite{BR} 
it is isomorphic to $G/P \times A$,
where $G/P$ is a flag variety for some semisimple algebraic group $G$ 
and $A$ is an abelian variety. 

Let $\hat{X} \subset \cit^N$ be the affine cone of $X$. As 
$f^*\0_X(-1) = \0_{\pit(T^*_M)}(-1)$, the map $f$ pulls back the $\cit^*$-bundle $\hat{X} \setminus \{0\}  \to X$ to the $\cit^*$-bundle 
$T^*_M \setminus \text{(zero section)} \to \pit(T^*_M) $, which gives a birational map $\hat{f}: T^*_M \setminus \text{(zero section)} \to \hat{X} \setminus \{0\}$. Note that $T^*_M \simeq T^*_{G/P} \times \cit^g \times A$, where $g = \dim A$. As $\hat{X}$ is affine, the map $\hat{f}$
contracts $\{x\} \times A$ to one point for any $x \in (T^*_{G/P}  \setminus \text{(zero section)}) \times (\cit^g \setminus \{0\})$, hence $\hat{f}$ factors through $T^*_M \setminus \text{(zero section)} \to (T^*_{G/P}  \setminus \text{(zero section)}) \times (\cit^g \setminus \{0\})$. As $\hat{f}$ is birational, we get $g=0$ and $M \simeq G/P$. If $\dim G/P =1$, then 
$G/P \simeq \pit^1$ and $X$ is smooth, which contradicts our assumption. 
Thus, $\dim G/P \geq 2$.

Let $\mu: T^*_{G/P} \to \bar{\0}$ be the Springer map associated to $G/P$;
this is a projective, generically finite morphism. By the Stein factorization, 
it follows that the algebra $A := H^0(T^*_{G/P}, \0_{T^*_{G/P}})$ is finitely generated, 
and $\mu$ factors as a birational morphism 
$\tilde{\mu} : T^*_{G/P} \to V := \Spec(A)$ followed by a finite
 morphism $\eta: V \to \bar{\0}$. 
Note that $\tilde{\mu}$ and $\eta$ are both $G \times \cit^*$-equivariant;
also, $V^{\cit^*}$ is a single point, say $o$.

Let $\cit[\hat{X}]$ be the coordinate ring of the affine variety $\hat{X}$, then the birational morphism $\hat{f}: T^*_{G/P} \setminus \text{(zero section)} \to \hat{X} \setminus \{0\} \subset \hat{X}$ induces a homomorphism of $\cit$-algebras $\cit[\hat{X}] \to H^0(T^*_{G/P} \setminus \text{(zero section)}, \0) = A$, where the latter equality follows from $\dim (G/P) \geq 2$. This gives a morphism $V \to \hat{X}$ and the map $\tilde{\mu}$ (restricted to $T^*_{G/P} \setminus \text{(zero section)}$) yields the Stein factorization of $\hat{f}$. Hence we get that $V \setminus \{o\}$ is the normalization of 
$\hat{X} \setminus \{0\}$. By taking the projectivisation, we get our claim.
\end{proof}

We now classify conical symplectic varieties with only isolated singularities.
\begin{lemma}\label{iso}
Let $W$ be a conical symplectic variety with only isolated singularities. Then  $W = \bar{\0}_{min}$, where $\0_{min}$
is the minimal nilpotent orbit in a simple Lie algebra. 
\end{lemma}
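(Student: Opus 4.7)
The plan is to project $W$ onto the contact manifold $X := (W\setminus\{0\})/\cit^*$ and apply the classification of Fano contact manifolds.

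Because the $\cit^*$-action preserves the singular locus and that locus is zero-dimensional, it reduces to $\{0\}$; hence $W \setminus \{0\}$ is smooth and $X$ is a smooth closed subvariety of $\pit^{N-1}$, with the $\cit^*$-bundle $W \setminus \{0\} \to X$ corresponding to $\0_X(-1)$. The scaling $\lambda^*\omega = \lambda\omega$ induces a holomorphic contact structure on $X$ whose contact line bundle is $L := \0_X(1)$: writing $E$ for the Euler vector field, $L_E\omega = \omega$ together with Cartan's formula yields $d(\iota_E\omega) = \omega$; the weight-one $1$-form $\alpha := \iota_E\omega$ is horizontal ($\iota_E \alpha = 0$) and descends to a section of $\Omega^1_X \otimes L$ whose kernel defines a corank-one contact distribution, with the non-degeneracy of the Lie bracket inherited from that of $\omega$. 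Since $L$ is very ample and $K_X = L^{-n}$ with $2n = \dim W$, $X$ is a Fano contact manifold.

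Next I would apply the classification of Fano contact manifolds. If $b_2(X) \geq 2$, the theorem of Kebekus--Peternell--Sommese--Wi\'sniewski used in Proposition~\ref{contact} gives $X \simeq \pit(T^*_M)$ for some smooth projective $M$, and the Borel--Remmert plus affineness-of-the-cone argument in the proof of Proposition~\ref{contact} identifies $M$ with a flag variety $G/P$; then $W$ is the affinisation of $T^*_{G/P}$, namely the Springer image $\bar{\0}_P$, and the isolated-singularity assumption forces $\0_P = \0_{min}$ to be a minimal nilpotent orbit (necessarily in type $A$, since that is the only type in which the minimal orbit is Richardson). If $b_2(X) = 1$, Beauville's classification of Picard-rank-one Fano contact manifolds identifies $X$ with $\pit(\bar{\0}_{min}) \subset \pit(\fg)$ for some simple Lie algebra $\fg$; this is accessible because the very ample contact line bundle, together with the Poisson bracket on $\cit[W]_1$ coming from $\omega$, equips $H^0(X, L)$ with a simple Lie algebra structure. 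Taking affine cones yields $W = \bar{\0}_{min}$ in both cases.

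The main obstacle is the $b_2(X) = 1$ branch, where Beauville's general conjecture on Fano contact manifolds of Picard rank one is not fully established. However, the combination of very-ampleness of $L$ (from $W \subset \cit^N$) and the conic Poisson structure (from the weight-one $\omega$) produces precisely the simple Lie algebra structure on $H^0(X, L) \cong \cit[W]_1$ that is needed to identify $X$ with the projectivisation of a minimal nilpotent orbit.
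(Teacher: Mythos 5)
Your reduction is the same one the paper uses: the $\cit^*$-invariance forces the singular locus to be $\{0\}$, so $\pit W$ is a smooth subvariety of $\pit^{N-1}$, and the weight-one symplectic form descends to a contact structure on $\pit W$ with contact line bundle $\0_{\pit W}(1)$ (your Euler-vector-field computation is exactly Lemma 1.4 of \cite{B1}, which the paper simply cites). Where you diverge is in how you conclude, and that is where the gap sits. The paper finishes in one stroke: since the contact line bundle is very ample, Corollary 1.8 of \cite{B1} states, unconditionally and with no hypothesis on $b_2$, that a smooth projective subvariety of $\pit^{N-1}$ carrying a contact structure with contact line bundle $\0(1)$ is the projectivised minimal nilpotent orbit $\pit\0_{min}$ of a simple Lie algebra. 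You instead split on $b_2(\pit W)$, run the $b_2\geq 2$ case through \cite{KPSW} and the arguments of Proposition \ref{contact} (which, note, is stated only for singular $X$, so it cannot be quoted verbatim here), and in the crucial $b_2 = 1$ case first invoke the unproven LeBrun--Salamon-type conjecture and then substitute for it the assertion that very ampleness plus the Poisson bracket on $\cit[W]_1$ ``produces precisely the simple Lie algebra structure on $H^0(X,L)$ that is needed to identify $X$ with the projectivisation of a minimal nilpotent orbit.''

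That last assertion is not a proof, and you flag it yourself as the main obstacle. Constructing the Lie algebra structure on $H^0(X,L)$ is the easy part; the content of Beauville's theorem is to show that this Lie algebra is simple, that the embedding $X \subset \pit(H^0(X,L)^*)$ identifies the affine cone over $X$ with a nilpotent coadjoint orbit closure, and that this orbit is the minimal one -- none of which your sketch addresses. So as written the $b_2=1$ branch is a genuine gap. It is, however, a gap of citation rather than of substance: the statement you need is precisely Cor.~1.8 of \cite{B1}, an established theorem under the very-ampleness hypothesis you already have, not the open conjecture; once you use it, the case distinction and the entire $b_2\geq 2$ detour become superfluous. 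Your final step, passing from $\pit W \simeq \pit\0_{min}$ to $W = \bar{\0}_{min}$ by taking cones, is at the same level of detail as the paper's own conclusion, so I have no separate complaint there.
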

\begin{proof}
As $W$ is invariant by the $\cit^*$-action, it has a unique singular point, which is $\{0\}$.  As $W \setminus \{0\} \to \pit W$ is a $\cit^*$-bundle, we deduce that $\pit W$ is smooth. Note that by 
Lemma 1.4 \cite{B1}, the symplectic form on $W \setminus \{0\}$ induces a contact structure on $\pit W$ with contact line bundle $\0_{\pit W}(1)$. As   $\0_{\pit W}(1)$ is very ample, we deduce that $\pit W \simeq \pit \0_{min}$ by Cor. 1.8 \cite{B1}.
This gives that $W = \bar{\0}_{min}$.
\end{proof}

Now let us prove Theorem \ref{main}.
First note that the $\cit^*$-action on $W$ lifts to $Z$ (see for example 
Prop. A.7 of \cite{N1}), which makes $\pi$ to be $\cit^*$-equivariant.
Let $\Omega$ be the symplectic form on $Z$ extending $\pi^* \omega$, then  we have $\lambda^* \Omega = \lambda \Omega$. We denote by $\pit Z$ the quotient
$(Z \setminus \pi^{-1}(0))/\cit^*$. Then we get a morphism $\bar{\pi}: \pit Z \to \pit W$. 
By Sections 3 and 4 of \cite{N2}, we have
\begin{lemma} \label{l.Nam}
$\pit Z$ is a smooth contact projective variety with the contact line bundle
$L:=\bar{\pi}^* \0_{\pit W}(1)$.
\end{lemma}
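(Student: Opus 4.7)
My approach is to descend the $\cit^*$-equivariant symplectic geometry of $Z$ to its quotient, verifying smoothness, projectivity, and the contact property in succession. First, I would check that the $\cit^*$-action on $Z$ restricts to a \emph{free} action on $U := Z \setminus \pi^{-1}(0)$: since $\pi$ is $\cit^*$-equivariant and the dilation action on $W \setminus \{0\} \subset \cit^N$ has trivial stabilizers, the stabilizer in $\cit^*$ of any point of $U$ must be trivial. Consequently $\pit Z = U/\cit^*$ is a smooth geometric quotient and $q : U \to \pit Z$ is a principal $\cit^*$-bundle. For projectivity, I would combine properness of $\bar\pi : \pit Z \to \pit W$ (from properness and $\cit^*$-equivariance of $\pi$, together with projectivity of $\pit W \subset \pit^{N-1}$) with quasi-projectivity of $\pit Z$ inherited from the quasi-projectivity of $Z$ over the affine variety $W$.

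The heart of the proof is the contact structure. Let $E$ denote the Euler vector field generating the $\cit^*$-action on $Z$ and set $\alpha := \iota_E \Omega$. The weight-one condition $\lambda^* \Omega = \lambda \Omega$ reads $\mathcal{L}_E \Omega = \Omega$, so Cartan's formula $\mathcal{L}_E = d \circ \iota_E + \iota_E \circ d$ combined with $d\Omega = 0$ yields $d\alpha = \Omega$. Since $\alpha(E) = \Omega(E,E) = 0$, the corank-one distribution $\ker \alpha \subset T_Z|_U$ contains the orbit direction and descends to a corank-one subbundle $F \subset T_{\pit Z}$. Non-degeneracy of the induced Levi form $F \times F \to T_{\pit Z}/F$ is inherited from non-degeneracy of $\Omega = d\alpha$ restricted to the symplectic complement of the orbit direction. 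The quotient line bundle $T_{\pit Z}/F$ is the bundle associated to $q$ via the weight-one character; in view of the Cartesian square relating $U \to \pit Z$ to the tautological $\cit^*$-bundle $W \setminus \{0\} \to \pit W$ through $\bar\pi$, this weight-one line bundle is canonically identified with $L = \bar\pi^* \0_{\pit W}(1)$.

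The main obstacle, in my view, is precisely this last identification: verifying that the descended line bundle is $\bar\pi^* \0_{\pit W}(1)$ and not merely some a priori different weight-one twist on $\pit Z$. Resolving this requires comparing the weight of $\Omega$ on $Z$ with the weight of the ambient $\cit^*$-action on the coordinates of $\cit^N$---both equal to $+1$ by assumption---and tracking sign conventions carefully through the descent. The bookkeeping needed for the canonical identification is exactly what is carried out in Sections 3 and 4 of \cite{N2}, which is the reference invoked in the statement.
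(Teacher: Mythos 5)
Your overall route is the right one, and it is in substance the argument the paper does not spell out at all: the proof of this lemma in the paper is just the citation to Sections 3 and 4 of \cite{N2}, and what you write (symplectization picture: $\alpha=\iota_E\Omega$, $\mathcal{L}_E\Omega=\Omega$ giving $d\alpha=\Omega$, descent of $\ker\alpha$ to a corank-one subbundle $F$ with Levi form induced by $\Omega$ on the symplectic orthogonal of the orbit direction, and identification of $T_{\pit Z}/F$ with the weight-one associated bundle) is exactly the standard construction carried out there. The contact part of your sketch is correct; note only that you should say explicitly that $\alpha$ is nowhere vanishing on $U=Z\setminus\pi^{-1}(0)$, which follows because the fixed locus of $\cit^*$ in $Z$ lies over $0$, so $E$ is nonvanishing on $U$ and $\Omega$ is nondegenerate.

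The one genuine gap is the sentence ``the action on $U$ is free, consequently $\pit Z=U/\cit^*$ is a smooth geometric quotient and $q:U\to\pit Z$ is a principal $\cit^*$-bundle.'' Freeness alone does not yield this: the free action $\lambda\cdot(x,y)=(\lambda x,\lambda^{-1}y)$ on $\cit^2\setminus\{0\}$ has a non-separated quotient, so you need properness of the action in addition to triviality of stabilizers. Here it is available from the same map you use for freeness: $\pi|_U:U\to W\setminus\{0\}$ is proper and $\cit^*$-equivariant, and the dilation action on $W\setminus\{0\}$ is proper and free (it is the principal $\cit^*$-bundle $W\setminus\{0\}\to\pit W$), hence the action on $U$ is proper and free, which gives the smooth quotient and the principal bundle structure. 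Once this is in place, the natural equivariant map $U\to \pit Z\times_{\pit W}(W\setminus\{0\})$ over $\pit Z$ is a morphism of principal $\cit^*$-bundles, hence an isomorphism; this simultaneously settles the point you flag as the main obstacle (the weight-one associated bundle of $U\to\pit Z$ is then canonically $\bar{\pi}^*$ of the weight-one associated bundle of $W\setminus\{0\}\to\pit W$, i.e.\ $\bar{\pi}^*\0_{\pit W}(1)$, with only the usual sign convention to check) and feeds into projectivity, since $\bar\pi$ is then proper onto the projective variety $\pit W$ and a power of a $\pi$-ample line bundle descends to a $\bar\pi$-ample one. With these two repairs your proof is complete and agrees with the argument of \cite{N2} that the paper invokes.
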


If $\pit W$ is smooth, then by Lemma \ref{iso}, $W = \bar{\0}_{min} \subset \mathfrak{g}$. As $W$ admits a symplectic resolution, this implies that $\mathfrak{g}$ is of type $A$ and $Z \simeq T^*_{\pit^n}$ (cf. \cite{F} or Proposition \ref{sym} below). 
Assume now that $\pit W$ is singular, then we can apply Lemma \ref{l.Nam} and Proposition \ref{contact} to conclude that $(\pit Z, L) \simeq (\pit (T^*_{G/P}), \0_{\pit(T^*_{G/P})}(1))$. Note that we may take $G = {\rm Aut}^0(G/P)$,
up to changing $G$ and $P$.  
 By the proof of Proposition \ref{contact}, 
we have the following diagram:
\[ \CD
Z @<<<  Z \setminus \pi^{-1}(0) @ >\simeq>> T^*_{G/P}\setminus \text{(zero section)}  @ >>> T^*_{G/P} \\ 
@V\pi VV  @VVV @VVV @VV\tilde{\mu}V \\
W @<<< W \setminus \{0\} @<\text{normalization}<< V\setminus \{o\} @>>> V\simeq \tilde{W}
\endCD \]

As $\pit W$ is not smooth, we may assume $G/P \neq \pit^n$. Now we can apply Lemma 5 \cite{N2} and the argument in 
{\em loc. cit.} (p. 183) to deduce that $Z \simeq T^*_{G/P}$ and this identifies $\tilde{\pi}: Z \to \tilde{W}$ with the map 
$\tilde{\mu}$ from the Stein factorization of the Springer map $\mu: T^*_{G/P} \to \bar{\0}$. 
As seen in the proof of Proposition  \ref{contact}, it follows that $\tilde{W}$ admits a $G \times \cit^*$-action 
such that $\tilde{\pi}$ is equivariant.


As $W \subset \cit^N$ and $\cit^*$ acts on $\cit^N$ by dilations, 
the coordinate ring $\cit[W]$ is a subalgebra of $\cit[\tilde{W}]$ 
generated by elements of degree 1. On the other hand, we have  
$\cit[\tilde{W}] = H^0(T^*_{G/P}, \0_{T^*_{G/P}})$. The space of degree 1 
elements in $H^0(T^*_{G/P}, \0_{T^*_{G/P}})$ is $H^0(G/P, T_{G/P})$, 
which is $\mathfrak{aut}(G/P) = \fg$. Thus, the algebra $\cit[W]$ 
is generated by a linear subspace of $\fg$, that we view as the orthogonal 
of a linear subspace $L \subset \fg$. So the normalization map 
$\tilde{W} \to W$ factors as the map 
$\tilde{W} \to \bar{\0} \subset \fg \cong \fg^*$ corresponding 
to the inclusion $\fg \subset H^0(T^*_{G/P}, \0_{T^*_{G/P}})$, 
followed by a map $p: \bar{\0} \to W$, the restriction of the quotient 
map $\fg \to \fg/L$. Thus $p$ is birational. If $W$ is normal, then  
$W = \tilde{W}$, hence we get $W = \bar{\0}$.
This finishes the proof of Theorem \ref{main}. \\

%
%
%

In the proof of Theorem \ref{main}, we used \cite{F} to deduce that among minimal nilpotent orbit closures, only that of type $A$ admits a symplectic resolution. One can in fact prove a stronger result.
\begin{proposition} \label{sym}
Let $W$ be a normal variety of dimension $2n \geq 4$ with an isolated singularity $0$ and a contracting $\cit^*$-action.  Assume that $W \setminus \{0\}$ admits a symplectic form $\omega$ satisfying $\lambda^* \omega = \lambda \omega, \forall \lambda \in \cit^*$.
If $W$ admits  a symplectic resolution $\pi: Z \to W$, then $Z \simeq T^*_{\pit^n}$ and $W \simeq \bar{\0}_{min} \subset \mathfrak{sl}_{n+1}$.
\end{proposition}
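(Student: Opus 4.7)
The strategy has three parts: first reduce to the setting of Lemma~\ref{iso} to identify $W$ with a minimal nilpotent orbit closure; then use the lifted $\cit^*$-action on $Z$ to reconstruct $Z$ as a cotangent bundle; and finally force type $A$ by a Picard-rank comparison.

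For the first part, since $W$ is normal with a contracting $\cit^*$-action, the coordinate ring $\cit[W]$ is positively graded and finitely generated, so $W$ is affine; after a Veronese-type embedding one arranges $W$ to sit dilation-invariantly in some $\cit^N$, with $\omega$ of weight~$1$. Extension of $\omega$ to any resolution follows from the given symplectic resolution via comparison through a common dominating resolution. Thus $W$ is a conical symplectic variety with isolated singularity, and Lemma~\ref{iso} yields $W \simeq \bar{\0}_{min} \subset \fg$ for a simple Lie algebra $\fg$, with $\pit W \simeq \pit \0_{min}$ the adjoint variety.

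Next I analyze the lifted $\cit^*$-action on $Z$ (existing by Prop.~A.7 of~\cite{N1}). Properness of $\pi$ forces it to be contracting, so $F := Z^{\cit^*} \subset \pi^{-1}(0)$. At a fixed point $p$ the tangent weights are non-negative integers paired to $1$ under $\Omega_p$, hence all equal to $0$ or $1$ with matching multiplicities~$n$; therefore $F$ is smooth of dimension~$n$, $N_{F/Z}$ has pure weight~$1$, Bialynicki-Birula gives $Z \simeq \mathrm{Tot}(N_{F/Z})$, and the symplectic pairing identifies $N_{F/Z} \simeq T^*_F$, so $Z \simeq T^*_F$. The adjoint group $G$ acts on $W$ with open orbit and lifts to $Z$ commuting with $\cit^*$, hence preserves~$F$; surjectivity of $\fg \oplus \cit \to T_z Z$ at a generic $z \in Z$ restricts to surjectivity of $\fg \to T_p F$ at generic $p \in F$, so $T_F$ is generated by global sections. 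By~\cite{BR}, $F \simeq G'/P' \times A$ with $A$ abelian, and the argument from the proof of Proposition~\ref{contact}, based on affineness of $W$, forces $A$ to be trivial. Hence $Z \simeq T^*_{G/P}$ with $n = \dim G/P \geq 2$.

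The main obstacle is the final combinatorial step. The isomorphism $\pit Z \simeq \pit W$ identifies $\pit \0_{min}$ with $\pit(T^*_{G/P})$, a $\pit^{n-1}$-bundle over $G/P$ whose Picard rank is at least~$2$. But $\pit \0_{min}$ has Picard rank~$2$ only when $\fg$ is of type~$A$, which forces $\fg \simeq \fsl_{n+1}$. In this case $\pit \0_{min}$ coincides with $\pit(T^*_{\pit^n})$, and matching the two $\pit^{n-1}$-bundle structures over $G/P$ and $\pit^n$ gives $G/P \simeq \pit^n$; hence $Z \simeq T^*_{\pit^n}$ and $W \simeq \bar{\0}_{min} \subset \fsl_{n+1}$, as claimed.
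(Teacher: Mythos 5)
Your proposal diverges from the paper's argument and, as written, has two genuine gaps. The first is the reduction to Lemma~\ref{iso}. Definition~\ref{def} requires $W\subset\cit^N$ to be invariant under the \emph{dilation} action, i.e.\ $\cit[W]$ must be generated in degree $1$ for the given grading; Proposition~\ref{sym} deliberately assumes only a contracting action, whose weights may be arbitrary positive integers. Your ``Veronese-type embedding'' does not repair this: choosing homogeneous generators of degrees $a_1,\dots,a_N$ embeds $W$ equivariantly into $\cit^N$ with weights $a_i$, not with dilations, while passing to a Veronese subalgebra $R^{(d)}\subset R=\cit[W]$ replaces $W$ by $\Spec R^{(d)}\cong W/\mu_d$, a different variety. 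Without generation in degree $1$ the action on $W\setminus\{0\}$ may have finite stabilizers, $(W\setminus\{0\})/\cit^*$ need not be smooth, $\0_{\pit W}(1)$ need not be a very ample line bundle, and the contact/Fano argument behind Lemma~\ref{iso} (Beauville, Cor.~1.8 of \cite{B1}) cannot be invoked. In effect you are assuming a nontrivial part of what must be proved, namely that the given grading is the dilation grading of a minimal orbit closure; the paper's proof never identifies $W$ first and so avoids this entirely.

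The second gap is in the fixed-point analysis on $Z$. Properness of $\pi$ over the contracted $W$ only gives that $\lim_{\lambda\to 0}\lambda\cdot z$ exists for every $z\in Z$; it does \emph{not} force the tangent weights at every fixed point to be non-negative (already for the blow-up of $\cit^2$ with a contracting action of weights $(1,2)$, one fixed point on the exceptional curve has a tangent weight $-1$). So your conclusion that all weights are $0$ or $1$, that $F=Z^{\cit^*}$ is smooth of pure dimension $n$, and that ``Bialynicki-Birula gives $Z\simeq{\rm Tot}(N_{F/Z})$'' is unjustified: BB only yields a decomposition into locally closed attracting cells, exactly one of which is open. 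This is precisely why the paper restricts attention to the component $M$ whose attracting set $U$ is open (there all normal weights are positive, hence equal to $1$ by the $\Omega$-pairing, giving $(U,\Omega)\simeq(T^*_M,\omega_{can})$ by Lemma~3.7 of \cite{F}), then uses Cor.~8.5 and 8.7 of \cite{CMSB} to get $\dim\pi^{-1}(0)\le n$ and $M\simeq\pit^n$, and finally deduces $U=Z$ from properness of the Springer map. Your closing Picard-rank comparison (adjoint variety of rank $2$ only in type $A$) is a pleasant alternative to the last identification step, but it rests on the two unsupported reductions above, so the proof as a whole does not go through.
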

\begin{proof}
Recall that the action of $\cit^*$ on $W$ lifts to an action on $Z$ (see Prop. A.7 of \cite{N1}).
Let $Z^{\cit^*}$ be the subvariety of $Z$ consisting of $\cit^*$-fixed points, which is a disjoint union of smooth subvarieties 
since $Z$ is smooth. As $W^{\cit^*} = \{0\}$, we have that $Z^{\cit^*}$ is contained
in the fiber $\pi^{-1}(0)$, hence it is a union of projective manifolds.

By the Bialynicki-Birula decomposition (see \cite{BB} which applies in this non-projective setting, since $Z$ is proper over 
$W$ which is contracted to $0$ by the $ \cit^*$-action), there exists an irreducible component $M$ of $Z^{\cit^*}$ 
such that the set $U:=\{z \in Z| \lim_{\lambda\to 0} \lambda \cdot z \in M\}$
is open in $Z$. Let $\Omega$ be the symplectic form on $Z$, which is the extension of $\pi^* \omega$.
 As $\lambda^* \Omega = \lambda \Omega$ for all $ \lambda \in \cit^*$, we obtain that $(U, \Omega) \simeq (T^*_M, \omega_{can})$ as $\cit^*$-varieties (cf. Lemma 3.7 \cite{F}), where $\omega_{can}$ is the canonical symplectic form on $T^*_M$.
On the other hand,  we have $\dim \pi^{-1}(0) \leq \cfrac{1}{2} \dim W = \dim M$ (Cor. 8.5 \cite{CMSB}). This implies that $M$  is an irreducible component of $\pi^{-1}(0)$. As $M$ is smooth, by Cor. 8.7 \cite{CMSB},
we get that $M \simeq \pit^n$. It follows (for example by the arguments above) that the map $U \to W$ is the Springer map
$\mu: T^*_{\pit^n} \to \bar{\0}_{min} \subset \mathfrak{sl}_{n+1}$.  As $\mu$ is projective, we get $U=Z$. 
\end{proof}

\section{An application}
A {\em symplectic variety} is a normal variety $W$ with a symplectic form
$\omega$ on its smooth locus such that for any resolution $\phi: Z \to W$, 
the 2-form $\phi^* \omega$ defined on $\phi^{-1}(W_{reg})$ extends 
to a regular 2-form on $Z$.

As an application of Theorem \ref{main}, we provide an alternative proof 
of the following main result of \cite{N2}.
\begin{theorem}[Namikawa]
Let $(W, \omega)$ be a singular symplectic variety embedded in $\cit^N$ 
as a complete intersection of hypersurfaces defined by homogeneous polynomials.
Assume that the symplectic form $\omega$ satisfies 
$\lambda^* \omega = \lambda^k \omega, \forall \lambda \in \cit^*$ for some 
$k$. Then $(W, \omega)$ is isomorphic to the nilpotent cone
$(\mathcal{N}, \omega_{KK})$ of a semisimple complex Lie algebra $\mathfrak{g}$ 
together with the Kirillov- Kostant form $\omega_{KK}$.
\end{theorem}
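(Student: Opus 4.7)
The plan is to verify that $(W, \omega)$ is a conical symplectic variety in the sense of Definition \ref{def}, to produce a symplectic resolution of $W$ via Proposition \ref{contact} applied to $\mathbb{P}W$, then to apply Theorem \ref{main} in order to identify $W$ with a Richardson orbit closure, and finally to use the complete intersection hypothesis a second time to pin down this orbit closure as the full nilpotent cone.

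First I would force $k = 1$. The symplectic form $\omega$ induces a Poisson bracket on the positively graded ring $\C[W]$ with $\deg\{f,g\} = \deg f + \deg g - k$, so $\{x_i, x_j\}$ lies in degree $2 - k$ in $\C[W]$. If $k \geq 3$ these degrees are negative and the brackets vanish; by Leibniz the Poisson bracket then vanishes globally, contradicting the non-degeneracy of $\omega$. If $k = 2$ the brackets are constants $c_{ij}$, and the corresponding Hamiltonian vector fields $X_{x_i} = \sum_j c_{ij} \partial_j$ are constant translations preserving $W$. Combining the non-degeneracy of $\omega$ at smooth points (which forces the $X_{x_i}(w)$ to span $T_w W$) with the cone-invariance of $W$ (which forces the span of the $v_i := X_{x_i}$ to be contained in $W$), one concludes $W$ equals the linear subspace $\mathrm{span}(v_i) \subset \C^N$, contradicting $W$ singular. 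Non-positive $k$ is excluded because $\omega$ must extend to a regular $2$-form on a resolution with the origin as unique $\C^*$-fixed point. Hence $k = 1$, and (i)--(iii) of Definition \ref{def} hold (using that symplectic varieties are normal and smooth in codimension one).

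Next, I would produce a symplectic resolution by applying Proposition \ref{contact} to a smooth resolution $f: Y \to \mathbb{P}W$ (which exists by Hironaka). The Reeb-type reduction of $\omega$ (now of weight one) gives a contact structure on the smooth locus of $\mathbb{P}W$, and the extension condition of Definition \ref{def}(iii) translates into the statement that this contact structure extends to $Y$ with contact line bundle $f^* \mathcal{O}_{\mathbb{P}W}(1)$. Proposition \ref{contact} then identifies $Y$ with $\mathbb{P}(T^*(G/P))$ for some flag variety $G/P$, and taking affine cones yields a symplectic resolution of the normalization $\tilde{W}$ of $W$. Since $W$ is normal (as a symplectic variety), $W = \tilde{W}$, so Theorem \ref{main}(v) applies and gives $W \cong \bar{\mathcal{O}}_P \subset \fg$ inside a semisimple Lie algebra $\fg \cong \C^N$, with $\omega$ identified (up to scalar) with the Kirillov-Kostant form on $\bar{\mathcal{O}}_P$.

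Finally I would invoke the complete intersection hypothesis a second time. Transferred through the embedding $W \hookrightarrow \fg$, it says $\bar{\mathcal{O}}_P$ is cut out of $\fg$ by $N - \dim \bar{\mathcal{O}}_P$ homogeneous polynomials. By Kostant's theorem, the nilpotent cone $\mathcal{N} \subset \fg$ is a complete intersection of codimension $r = \mathrm{rank}(\fg)$, defined by the $r$ basic $G$-invariants. Any Richardson orbit closure lies in $\mathcal{N}$; if $P$ is not a Borel then $\bar{\mathcal{O}}_P \subsetneq \mathcal{N}$ is a proper closed $G$-subvariety of $\fg$ with codimension strictly greater than $r$, and a Hilbert-series comparison with the prospective complete intersection form $\prod (1 - t^{d_i})/(1 - t)^N$ rules out its being a complete intersection. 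Hence $P$ is a Borel, $\bar{\mathcal{O}}_P = \mathcal{N}$, and $(W, \omega) \cong (\mathcal{N}, \omega_{KK})$ as required. I expect the main obstacle to be verifying that the contact structure on the smooth locus of $\mathbb{P}W$ extends to the chosen resolution with the prescribed contact line bundle: this should follow from the extension of $\omega$ to resolutions of $W$, but the precise translation between these two extension conditions requires careful checking.
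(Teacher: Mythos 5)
Your overall skeleton (show $W$ is conical symplectic, produce a symplectic resolution, apply Theorem \ref{main}, then use the complete intersection hypothesis to single out the nilpotent cone) is the same as the paper's, but the two steps that carry all the weight are not actually established. The first gap is your claim that Definition \ref{def}(iii) lets you put a contact structure on an \emph{arbitrary} resolution $Y \to \pit W$ with contact line bundle $f^*\0_{\pit W}(1)$, so that Proposition \ref{contact} applies. Condition (iii) only says the pulled-back $2$-form extends \emph{regularly}; on a general resolution the extension degenerates along the exceptional locus, and correspondingly the contact distribution on $(\pit W)_{reg}$ does not extend as an everywhere non-degenerate corank-one subbundle of $T_Y$. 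Having such a contact structure on some resolution of $\pit W$ is essentially \emph{equivalent} to $W$ admitting a symplectic resolution --- which is exactly the nontrivial input the paper imports from Section 2 of \cite{N2}, where the complete intersection hypothesis is used in an essential way. If your translation were correct, Proposition \ref{contact} would produce a symplectic resolution for \emph{every} conical symplectic variety with singular projectivisation, which is false (most nilpotent orbit closures, e.g.\ the subregular orbit closure in $G_2$, admit none, cf.\ \cite{F}). So the ``main obstacle'' you flag at the end is not a routine verification; it is the missing theorem, and you cannot get it without using the complete intersection hypothesis at this stage.

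The second gap is the final step. Once Theorem \ref{main} gives $W = \bar{\0}_P$, you must show that a proper Richardson orbit closure is never a complete intersection in $\fg$, and you dispatch this with ``a Hilbert-series comparison \dots rules out its being a complete intersection'' without saying what is being compared or why it fails. There is no obvious Hilbert-series obstruction: a putative complete intersection of codimension $r' > \mathrm{rk}(\fg)$ has a perfectly consistent series $\prod_i(1-t^{d_i})/(1-t)^N$, and to derive a contradiction one needs genuine structural information. This exclusion is precisely the substantial part of both Namikawa's original proof and of the paper's new argument: the paper constructs the flat morphism $f : \fg \to V = (I/\fm I)^*$, shows every $T$-weight of $V$ is a weight of $\fg$ so that $V$ is built from trivial summands and the little adjoint module, eliminates types $C_n$, $F_4$, $G_2$ by the inequality $\codim_\fg(\0) \le \tfrac13 \dim \fg$ from \eqref{eq}, and in type $B_n$ compares the degrees of the basic invariants with the relation $\sum_i d_i = \tfrac12 \dim \0 + \codim_\fg(\0)$ to reach the contradiction $(2n-1)a_n = n$. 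None of this (nor any substitute for it) appears in your proposal, so as written the argument establishes neither the existence of the symplectic resolution nor the identification $\bar{\0}_P = \mathcal{N}$.
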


\begin{proof}
By Section 2 of \cite{N2}, $W$ is a conical symplectic variety with 
a symplectic resolution, hence by Theorem \ref{main}, we get that 
$(W, \omega)$ is a nilpotent orbit closure in a semi-simple Lie algebra 
$\fg$. But every nilpotent orbit closure $\bar{\sO} \subset \fg$ 
which is a complete intersection in $\mathfrak{g}$ 
must be the full nilpotent cone, by the main result of Section 7 
of \cite{N2}. 
We now provide a proof of that result, which is somehow shorter and
more uniform than the original one.

Let $d_1, \cdots, d_r \geq 2$ be the degrees of defining equations of 
$\bar{\0}$ in $\fg$. Then $ r = \codim_\fg(\0)$ and by \cite{N2} 
(Section 2, p. 160), we have 
\begin{equation}\label{eq}
\sum_{i=1}^r d_i  = \cfrac{1}{2} \dim \0 + \codim_\fg(\0),  
\quad \text{and} \quad \codim_\fg(\0) \leq \cfrac{1}{3} \dim \fg. 
\end{equation}
We may assume that $\fg$ is simple. We denote by $I$ the ideal of 
$\bar{\sO}$ in the coordinate ring $\cit[\fg]$, and by $\fm$ the maximal 
ideal of $0$ in $\cit[\fg]$. Let $G$ be the adjoint group of $\fg$; then 
$G \times \cit^*$ acts on $\fg$ (where $\cit^*$ acts by dilations) 
and stabilizes $\bar{\sO}$ and $0$. Hence 
$G \times \cit^*$ acts on $\cit[\fg]$ and stabilizes $I \subset \fm$. 
Since $G \times \cit^*$ is reductive, we may find a submodule $M \subset I$ 
which is mapped isomorphically to $I/\fm I$ under the quotient map 
$I \to I/\fm I$. By the graded Nakayama lemma, a homogeneous basis 
of $M$ yields a minimal generating system of the ideal $I$, 
and hence a regular sequence in $\cit[\fg]$ since $\bar{\sO}$ 
is a complete intersection. In geometric terms, the morphism
\[ f : \fg \longrightarrow M^*=: V \] 
corresponding to the inclusion $M \subset \cit[\fg]$ is flat and 
its (scheme-theoretic) fiber $f^{-1}(0)$ equals $\bar{\sO}$ 
(this is a slightly more precise version of Lemma 3 in \cite{N2}). 
Thus, $f$ is open, and hence surjective by $\cit^*$-equivariance. 

Choose a maximal torus $T \subset G$ and denote by $\ft$ its Lie algebra; 
this is a Cartan subalgebra of $\fg$. Since $G \ft$ is dense in $\fg$, 
there exists $x \in \ft$ such that the differential $df_x : \fg \to V$ 
is surjective. As $df_x$ is linear and $T$-equivariant, it follows that 
each weight of the $T$-module $V$ is also a weight of $\fg$. In particular, 
the highest weight of any simple summand of the $G$-module $V$ is either the
highest root, or the highest short root $\lambda$ (if $G$ is not simply laced), 
or $0$. Moreover, the highest root cannot occur, since the corresponding 
simple module is just $\fg$, and 
$\dim(V) = \dim(\fg) - \dim(\bar{\sO}) < \dim(\fg)$.

If $G$ is simply laced, then $V$ must be the trivial $G$-module, 
and hence $\bar{\sO}$ contains the nilpotent cone $\mathcal{N}$; so 
we conclude that $\bar{\sO} = \mathcal{N}$. Thus, we may assume 
that $G$ is not simply laced; then we have an isomorphism of $G$-modules 
\[ V \cong p V(0) \oplus q V(\lambda), \] 
where $p$, $q$ are non-negative integers, $V(0)$ denotes the trivial 
$G$-module $\cit$, and $V(\lambda)$, the simple $G$-module with highest 
weight $\lambda$. (The latter module is called the ``little adjoint module'' 
in \cite{P}, where its invariant theoretical properties are investigated.)
If $q = 0$ then we conclude as above that $\bar{\sO} = \mathcal{N}$; thus, 
we may further assume that $q \geq 1$. 

If $G$ of type $C_n$ (resp. $F_4$, $G_2$), then $V(\lambda)$
has dimension   $2 n^2 - n - 1$ (resp. $26$, $7$),
which contradicts  the inequality 
$\codim_\fg (\0) = \dim V \leq \frac{1}{3} \dim(\fg)$ in \eqref{eq}. 
Thus, we may assume that $G = {\rm SO}(2n + 1)$; then 
$V(\lambda)$ is the natural $G$-module $\cit^{2n+1}$.

Since $f$ is surjective and $G \fg^T$ is dense in $\fg$, it follows
that $G V^T$ is dense in $V$. But this does not hold for 
$V = 2 V(\lambda)$ (as $V(\lambda)^T$ is a line), and hence $q \leq 1$. 
So we may take $V = p V(0) \oplus V(\lambda)$. Since the algebra
of invariant functions $\cit[V(\lambda)]^G$ is generated by the
quadratic form defining $G$, it follows that the categorical quotient 
$V/\!/G := \Spec \, \cit[V]^G$ is an affine space, and the quotient morphism
\[ q_V : V \longrightarrow V/\!/G \]
is flat with reduced fibers. Since $q_V$ sits in a commutative square
\[ \CD
\fg @>{f}>> V \\
@V{q_{\fg}}VV @V{q_V}VV \\
\fg/\!/G @>{f/\!/G}>> V/\!/G, \\
\endCD \]
where $f$, $q_{\fg}$ are flat with reduced fibers, we see that 
$f/\!/G$ is flat with reduced fibers, too.

We claim that $f/\!/G$ is also finite. Consider indeed the restriction
$f^T : \fg^T = \ft \to V^T$. Then the fiber of $f^T$ at $0$ equals
$\bar{\0} \cap \ft = \{ 0 \}$ (as a set). 
Since $f^T$ is equivariant for the natural
actions of $\cit^*$, it follows that $f^T$ is finite. Hence so is
$f^T/W : \fg^T/W \to V^T/W$, where $W$ denotes the Weyl group of
$(G,T)$. But the natural map $\fg^T/W \to \fg/\!/G$ is an isomorphism
by the Chevalley restriction theorem; moreover, the analogous map
$V^T/W \to V/\!/G$ is finite, and dominant since $GV^T$ is dense in $V$. 
The finiteness of  $f/\!/G$ follows from this in view of the commutative square
\[ \CD
\fg^T/W @>{f^T/W}>> V^T/W \\
@VVV @VVV \\
\fg/\!/G @>{f/\!/G}>> V/\!/G. \\
\endCD \]

Since $f/\!/G$ is also flat with reduced fibers, it is a finite \'etale cover, 
and hence an isomorphism as $V/\!/G$ is an affine space. In other words,
the algebra $\cit[\fg]^G$ is freely generated by the pull-backs 
under $f$ of the $p$ projections $V \to V(0)$ and of the basic 
invariant of $V(\lambda)$. The degrees of these invariants are
$a_1,\ldots,a_p,2 a_{p+1}$, where $a_1,\ldots,a_p$ denote the
weights of the action of $\cit^*$ on $p V(0)$, and $a_{p+1}$
the weight of that action on $V(\lambda)$. On the other hand,
the degrees of the basic invariants of $\cit[\fg]^G$ are 
$2,4,\ldots,2n$. In particular, we have $n = p + 1$ and
\[ a_1 + \cdots + a_{n-1} + 2 a_n = n^2 + n. \] 

This implies that $r = \dim V = 3n$ and 
$\dim \0 = \dim \mathfrak{g}- 3n = 2n^2-2n$.
Moreover, we may assume that the degrees of defining equations of $\bar{\0}$
satisfy $d_i = a_i$ for $i = 1, \cdots, n-1$, and $d_n = \cdots = d_{3n} = a_n$.
By \eqref{eq}, we have 
$d_1 + \cdots + d_{3n} = (n^2 - n) + 3 n$,
which gives that 
\[ a_1 + \cdots + a_{n-1} + (2 n + 1) a_n = n^2 + 2 n. \]
Combining the two displayed equalities yields
$(2n - 1) a_n = n$ which is not possible since $a_n = d_n \geq 2$.

\end{proof}

%
%
%
%

\bigskip

Michel Brion 

Institut Fourier, Universit\'e de Grenoble, France 

e-mail: Michel.Brion@ujf-grenoble.fr 

\bigskip
Baohua Fu

Institute of Mathematics, AMSS, 55 ZhongGuanCun East Road,

Beijing, 100190, China

e-mail: bhfu@math.ac.cn

\end{document}